\documentclass[11pt]{amsart}
\usepackage{times}
\usepackage{mathrsfs}
\usepackage{algorithm}
\usepackage{algpseudocode}

\topmargin = - 40 pt 
\makeatletter
\providecommand{\@dottedtocline}[5]{%
  \@tocline{0}{1pt}{0pt}{}{\tocchapter}{#4}{#5}}
\makeatother

\usepackage{verbatim}
\usepackage[T1]{fontenc}
\usepackage[margin=28mm,heightrounded]{geometry}
\usepackage{amsmath,amssymb,amsthm,mathtools,booktabs,hyperref}
\usepackage{listings}
\hypersetup{colorlinks=true,linkcolor=blue,citecolor=blue,urlcolor=blue}
\newcommand{\Kis}{\mathsf {Kis}}
\newcommand{\Mat}{M}
\newcommand{\MM}{{\mathcal \Mat}}
\newcommand{\Bin}{H}

\newcommand{\ceven}{c_{\mathrm{even}}}
\newcommand{\codd}{c_{\mathrm{odd}}}
\newcommand{\PhiP}{\Phi_{+}}
\newcommand{\PhiM}{\Phi_{-}}
\newcommand{\esc}{g}
\newcommand{\lgTwo}{\log_2}

\newcommand{\cv}{\mathbf{c}}
\newcommand{\unit}{e_1}
\newcommand{\height}{\operatorname{ht}}
\newcommand{\supp}{\operatorname{supp}}
\newcommand{\norm}[1]{\left\lVert#1\right\rVert}
\newtheorem{theorem}{Theorem}[section]
\newtheorem{proposition}[theorem]{Proposition}
\newtheorem{lemma}[theorem]{Lemma}

\theoremstyle{definition}\newtheorem{definition}[theorem]{Definition}
\theoremstyle{remark}
\title[Cardinalities of Kiselman monoids and their parity asymptotics]{Cardinalities of Kiselman monoids\\ and their parity asymptotics}
\author{Alessandro D'Andrea}
\address{Alma Mater studiorum -- Universit\`a di Bologna\\
Dipartimento di Matematica\\
Piazza di porta San Donato 5, 40126 Bologna, Italy}
\email{a.dandrea@unibo.it}

\author{Salvatore Stella}
\address{Universit\`a degli Studi dell'Aquila\\
Dipartimento di Ingegneria e Scienze dell'Informazione e Mate\-ma\-tica\\
Via Vetoio, 67100 L'Aquila, Italy}
\email{salvatore.stella@univaq.it}
\thanks{Salvatore Stella was partially supported by PRIN 20223FEA2E and INdAM/GNSAGA}

\date{\today}
\begin{document}
\begin{abstract}
The cardinalities of Kiselman's semigroups $K_n$ are known to grow double exponentially in~$n$: the sequence $2^{-n/2}\log_2|K_n|$ converges~\cite{DS} along the even and odd $n$ subsequences to positive limits $\ceven$ and $\codd$ that appear to coincide.
In this paper we prove that they are indeed distinct.

Since Kiselman's semigroups are $\mathcal L$-trivial, the suffix relation defines a partial order on $K_n$.
We introduce the \emph{Kiselman transform} $\Kis(P)$ of a poset $P$, a construction recovering the poset associated with~$K_{n+1}$ from that associated with $K_n$.
Counting strictly increasing chains in $P$ and $\Kis(P)$ yields a linear recurrence governed by a symmetric infinite matrix with explicit binomial entries.
This can be used towards efficiently computing $|K_n|$, allowing the values listed in OEIS sequence A125625 to be extended beyond $n=14$.
A generating functions analysis gives formulas for the above asymptotic limits, together with rigorous numerical bounds establishing that
\[
\ceven-\codd
=1.9945066605\ldots\times10^{-6}.
\]
\end{abstract}
\maketitle

\section*{Introduction}
The Kiselman monoids generalize a semigroup introduced by Kiselman in the context of convex analysis~\cite{Ki}.
His construction concerns three idempotent operators on extended-real-valued functions: taking the greatest convex minorant, taking the greatest lower semicontinuous minorant, and replacing a function by the constant $-\infty$ whenever it assumes the value $-\infty$.
Together with the identity, these operators generate a monoid of eighteen elements.
Its defining relations generalize naturally to the family of monoids
\[
K_n=
\left\langle
 a_1,\ldots,a_n
\;\middle|\;
\begin{aligned}
a_i^2&=a_i &&(1\le i\le n),\\
a_i a_j a_i&=a_j a_i a_j=a_j a_i
&& (1\le i<j\le n)
\end{aligned}
\right\rangle,
\]
with Kiselman's original example corresponding to $K_3$.
These monoids, and their generalizations, have since been studied in connection with quotients of $0$-Hecke monoids~\cite{GM}, graph dynamical systems~\cite{CD}, and combinatorial normal forms~\cite{AD}.
Further developments include~\cite{An26,AVZ15,L25,MO19a}.

Kudryavtseva and Mazorchuk~\cite{KM} undertook a systematic study of the algebro-combinatorial structure of $K_n$.
In particular, they proved that each element admits a unique expression (that they call \emph{canonical}) of minimal length in the standard generators~\cite[Theorem~6]{KM}, and that all of Green's relations on~$K_n$ are trivial~\cite[Theorem~22]{KM}.
Consequently, $K_n$ is $\mathcal L$-trivial, and the relation
\[
x\le y
\quad\Longleftrightarrow\quad
K_n y\subset K_nx
\]
defines a partial order on $K_n$.
We call this the \emph{suffix order}: $x\le y$ precisely when $y$ can be obtained from~$x$ by multiplication on the left.
This partial order can also be described in terms of the \emph{update system} introduced in~\cite{CD}.

In Section~\ref{Kis_transform} we introduce a construction on posets abstracting the passage from the suffix poset of~$K_n$ to that of $K_{n+1}$.
Recall that $K_n$ identifies with the submonoid of $K_{n+1}$ generated by $a_1, \dots, a_n$.
The canonical-word theorem~\cite[Theorem 6]{KM} implies that each element in $K_{n+1} \setminus K_n$ has a unique canonical expression $ua_{n+1}v$, where $u,v\in K_n$ are canonical words in $\{a_1, \dots, a_{n}\}$.
The \emph{deletion properties} established in~\cite[Section 10]{KM}, after reversing the ordering of the generators, show that such an element is uniquely determined by the pair $(v,uv)$.
Consequently, the mapping
\[
K_{n+1}\setminus K_n \ni ua_{n+1}v\longmapsto(v,uv) \in K_n^2
\]
induces a bijection of sets $K_{n+1}\cong K_n\sqcup\{(x,y)\in K_n^2:x\le y\}$.
Indeed, every comparable pair occurs: if $y=ux$, the element $ua_{n+1}x$ corresponds to $(x,y)$.
One can therefore use $(x, ux)$ as an alternative notation for the element $ua_{n+1}x$.

The suffix order on $K_{n+1}$ can then be readily described in terms of the suffix order on $K_n$.
The subset~$K_n\subset K_{n+1}$ retains its partial order, no element from $K_{n+1} \setminus K_n$ sits below an element of $K_n$, and
\[
\begin{aligned}
p\le(x,y)
&\quad\Longleftrightarrow\quad p\le x,\\
(x,y)\le(u,v)
&\quad\Longleftrightarrow\quad
\bigl(x=u\ \text{and}\ y\le v\bigr)
\ \text{or}\ y\le u.
\end{aligned}
\]

Since this description involves only the poset structure on $K_n$, it can be generalized to arbitrary posets.
For a poset $P$, define its \emph{Kiselman transform} to be
\[
\Kis(P)=P\sqcup\{(x,y)\in P^2:x\le y\}.
\]
The proof that the above rules define a partial order on $\Kis(P)$ does not depend on the special Kiselman case.

The above construction also provides a recursive description of strictly increasing chains.
When $P$ is finite, the numbers of chains of each length in $\Kis(P)$ are obtained from those in $P$ by a fixed linear transformation.
More precisely, if
\[
  \cv_j(P)=\#\{x_1<\cdots<x_j\text{ in }P\}
\]
denotes the number of strictly increasing chains in $P$ with $j$ elements, and we write
\[
  \cv(P)=(\cv_1(P),\cv_2(P),\ldots)^{\mathsf T},
\]
then $\cv(P)$ has finite support, and its first entry is $|P|$.
In Section~\ref{chain_vectors} we prove that
\[
  \cv(\Kis(P))=\Mat\cv(P),
\]
where $\Mat = (\Mat_{ij})_{i,j\geq1}$ is an infinite \textbf{symmetric} matrix whose coefficients are simply expressed in terms of binomial coefficients
\[
  \Mat_{i,j}=\binom{i+j}{2i-j}.
\]
Iterating this transformation from the one-point poset provides an explicit method for computing the cardinalities of Kiselman monoids.
We use this in Section~\ref{explicit_bound} to show that one may obtain arbitrarily precise estimates of the asymptotic constants
\begin{equation}
\begin{aligned}
   \ceven
   &:=\lim_{n\to\infty}2^{-n}\lgTwo|K_{2n}|,\\
   \codd
   &:=\lim_{n\to\infty}2^{-n-\frac12}\lgTwo|K_{2n+1}|,
\end{aligned}
\end{equation}
introduced in~\cite{DS}.
The computations in this note, carried out with the assistance of ChatGPT, unexpectedly establish that $\ceven>\codd$.
The relative difference is remarkably small:
\[
\frac{\ceven}{\codd}-1\approx7.96\times10^{-7}.
\]

A further feature of the Kiselman transform is that posets of the same height\footnote{The height $\height(P)$ of a poset $P$ counts the elements in a longest chain.} have iterated cardinalities whose ratio stays between two fixed positive constants.
Their normalized logarithms therefore have the same asymptotic behaviour.
More precisely, if $P, Q$ are posets with the same height, then
\[
\lim_{n \to \infty} 2^{-n/2}(\lgTwo|\Kis^n(P)|-\lgTwo|\Kis^n(Q)|) = 0.
\]

As a final curious application of our techniques, we also compute $|K_n|$ modulo $4$.
Notice that the parity of $|K_n|$ had been previously determined in~\cite{An26Zero}.

In the Appendix, we describe an algorithm for the recursive computation of $|K_n|$, which may be used to extend the range tabulated in OEIS sequence A125625 beyond $n=14$.
The files attached to this arXiv submission contain Python and C implementations of it and its output up to $n=30$.

The historical notes attached to~\cite{OEIS} attribute the initial submission to Seidon Alsaody~\cite{Alsaody}, the value $|K_6|$ to a Magma computation by Klaus Brockhaus, and later extensions to M.~Selin.
The OEIS entry provides the C++ source at the (dead) link
\[
  \texttt{https://www.f.kth.se/\string~mxrten/kiselman\_semigroup.cpp}
\]
The original Selin source was thus not recovered.
The exact matrix computation in this note independently reproduces known values up to $n=14$ and extends the computations: it has been uploaded on the OEIS website.

\section{The suffix order on $K_n$ and the Kiselman transform of a poset}\label{Kis_transform}

\subsection{Canonical words in $K_n$}
Most of what is known about Kiselman's semigroups $K_n$ originates from Kudryavtseva and Mazorchuk's work~\cite{KM}: we summarize here the information that we are going to need in our construction.

Kiselman's semigroup \(K_n\) is the monoid defined by the presentation
\begin{equation}\label{eq:kiselman-presentation}
K_n=
\left\langle
 a_1,\ldots,a_n
\;\middle|\;
\begin{aligned}
a_i^2&=a_i &&(1\le i\le n),\\
a_i a_j a_i&=a_j a_i a_j=a_j a_i
&& (1\le i<j\le n)
\end{aligned}
\right\rangle.
\end{equation}
A word in the standard generators of $K_n$ is called \emph{canonical} if, between any two occurrences of the same generator $a_i$, there occur both a generator $a_j$ with $j<i$ and a generator $a_k$ with $k>i$.
It is enough to check this condition between consecutive occurrences of each generator.
The empty word is also canonical and represents the identity element.

It is known~\cite[Theorem~6]{KM} that every element of $K_n$ is represented by exactly one canonical word, which is also its unique representative of minimal length.
Thus the elements of $K_n$ may be identified with canonical words in the generators $\{a_1, \dots, a_n\}$.

The natural homomorphism $K_n\to K_{n+1}$ is injective: mapping $a_{n+1}$ to $1$ and fixing each $a_i$ respects all the defining relations and gives a retraction
\[
\pi:K_{n+1}\longrightarrow K_n.
\]
We henceforth identify $K_n$ with the submonoid of $K_{n+1}$ generated by $a_1,\ldots,a_n$.
A canonical word in~\(K_{n+1}\) may contain \(a_{n+1}\) at most once.
Every element in \(K_{n+1}\setminus K_n\) therefore has a unique canonical expression \(ua_{n+1}v\), with \(u,v\) canonical words in \(a_1,\ldots,a_n\).

We need a slightly stronger observation.
If \(u,v\) are canonical words in \(a_1,\ldots,a_n\), then reducing \(ua_{n+1}v\) to canonical form does not change its suffix $v$.
Indeed, the elementary reductions are
\begin{align}
a_kwa_k&=wa_k
&&\text{if every letter of \(w\) has index greater than \(k\),}
\label{eq:delete-left}\\
a_kwa_k&=a_kw
&&\text{if every letter of \(w\) has index smaller than \(k\).}
\label{eq:delete-right}
\end{align}
A reduction involving occurrences on opposite sides of $a_{n+1}$ can only delete the occurrence on the left: the intervening generator $a_{n+1}$ prevents deletion of the occurrence on the right.
Reductions within the left part likewise leave $v$ unchanged.

\subsection{The suffix order and a parametrization of $K_{n+1} \setminus K_n$}
Equip $K_n$ with its suffix order
\[
x\le y\quad\Longleftrightarrow\quad y\in K_nx.
\]
This is a partial order by $\mathcal L$-triviality, see~\cite[Theorem 22]{KM}.

\begin{proposition}\label{prop:pair-parametrization}
For \(u,u',v\in K_n\),
\begin{equation}\label{eq:deletion-equivalence}
ua_{n+1}v=u'a_{n+1}v
\quad\Longleftrightarrow\quad
uv=u'v.
\end{equation}
Consequently, the assignment
\[
ua_{n+1}v\longmapsto(v,uv)
\]
induces a bijection from \(K_{n+1}\setminus K_n\) onto the set of comparable pairs \(\{(x,y)\in K_n^2:x\le y\}\).
\end{proposition}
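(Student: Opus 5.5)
The forward implication is immediate from the retraction $\pi\colon K_{n+1}\to K_n$, which sends $a_{n+1}\mapsto 1$ and fixes $a_1,\dots,a_n$. Applying $\pi$ to $ua_{n+1}v=u'a_{n+1}v$ gives $uv=u'v$. The same map shows that the assignment $ua_{n+1}v\mapsto(v,uv)$ is well defined, provided the suffix $v$ is intrinsic to the element. That is guaranteed by the observation preceding the proposition: reducing $ua_{n+1}v$ to canonical form never touches $v$, so $v$ is the canonical word following the unique occurrence of $a_{n+1}$. The pair $(v,uv)$ is comparable because $uv\in K_nv$. Surjectivity onto comparable pairs is also immediate: if $y=ux$, then $ua_{n+1}x$ maps to $(x,y)$.

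The real content is the reverse implication in \eqref{eq:deletion-equivalence}. I would prove that the element $ua_{n+1}v$ is recovered from $v$ together with $\pi(ua_{n+1}v)=uv$. The plan is to pass through the anti-isomorphism $K_{n+1}\to K_{n+1}$ that reverses words and sends $a_i\mapsto a_{n+2-i}$; the defining relations are preserved, since $a_ia_ja_i=a_ja_ia_j=a_ja_i$ for $i<j$ is mapped to the same family. Under this map, $a_{n+1}$ becomes the generator $a_1$ and the suffix $v$ becomes a prefix. I would then invoke the deletion results of \cite[Section~10]{KM}, which I expect to say the following: an element is determined by the element obtained by deleting one generator, together with the part of its canonical word on one side of the unique occurrence of that generator. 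Translating back gives exactly the statement that $ua_{n+1}v$ is determined by $(v,uv)$.

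If the cited result is not available in precisely that form, I would give a direct argument by induction on the length of the canonical word $u$. Write $ua_{n+1}v=\tilde u a_{n+1}v$ in canonical form, where $\tilde u$ is obtained from $u$ by the left deletions \eqref{eq:delete-left} across $a_{n+1}$. I would show that $\tilde u$ depends only on $uv$ and $v$, by characterizing it as the canonical word of the shortest $w\in K_n$ with $wv=uv$ such that $wa_{n+1}v$ is canonical. Concretely, an initial letter $a_k$ of the left part is deleted exactly when the next occurrence of $a_k$ is in $v$ and every letter in between (including $a_{n+1}$) has index greater than $k$. This condition is expressible in terms of $v$ alone. Hence two words $u,u'$ with $uv=u'v$ should yield the same reduced left part. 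To justify this, I would again use the induction: strip off the first letter and compare the canonical forms of $uv$ and $u'v$ in $K_n$ via the uniqueness of canonical words \cite[Theorem~6]{KM}.

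The main obstacle is this last step: uniqueness of canonical words in $K_n$ applies to $uv$, whereas the deletions that produce $\tilde u$ happen in $K_{n+1}$. One must check that the reductions turning $uv$ into canonical form in $K_n$ correspond exactly to the reductions turning $ua_{n+1}v$ into canonical form, apart from those that become impossible because $a_{n+1}$ blocks a deletion of type \eqref{eq:delete-right}. I would first try to prove this correspondence directly, letter by letter, before falling back on the general statement from \cite{KM}.
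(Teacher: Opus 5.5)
Your main route is the paper's proof. The retraction $\pi$ gives the forward direction and makes $uv$ intrinsic. Since canonical reduction of $ua_{n+1}v$ only deletes letters to the left of $a_{n+1}$, both sides become canonical words $wa_{n+1}v$ and $w'a_{n+1}v$ with $wv=uv=u'v=w'v$, and the deletion result of Kudryavtseva--Mazorchuk (their Proposition~29), transported by the antiautomorphism $a_i\mapsto a_{n+2-i}$, forces $w=w'$. Your guess about that result's content is accurate, provided the side is the suffix after $a_{n+1}$ (equivalently, the prefix before $a_1$), as in your translation back. So the fallback induction, whose key step you leave open, is not needed.
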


\begin{proof}
The forward implication in \eqref{eq:deletion-equivalence} follows by applying the retraction \(\pi\).
Conversely, choose canonical representatives and reduce \(ua_{n+1}v\) and \(u'a_{n+1}v\) to canonical words.
By \eqref{eq:delete-left}
, both retain the suffix \(v\), so they have the form \(wa_{n+1}v\) and \(w'a_{n+1}v\).
Applying \(\pi\) gives
\[
wv=uv=u'v=w'v.
\]
Proposition~29 of~\cite{KM}, after applying the antiautomorphism \(a_i\mapsto a_{n+2-i}\), implies \(w=w'\).
This proves the reverse implication and hence the injectivity of the above assignment.

Its image consists of comparable pairs, since \(v\le uv\).
Conversely, if \(x\le y\), choose \(u\in K_n\) such that~\(y=ux\).
Then \(ua_{n+1}x\) maps to \((x,y)\), proving surjectivity.
\end{proof}

Adding back elements of \(K_n\), Proposition~\ref{prop:pair-parametrization} gives the decomposition
\begin{equation}\label{eq:pair-decomposition}
K_{n+1}\cong
K_n\sqcup\{(x,y)\in K_n^2:x\le y\}.
\end{equation}

The suffix order on \(K_{n+1}\) is particularly easy to describe in these ``coordinates''.

\begin{proposition}\label{prop:transformed-suffix-order}
For \(p,q\in K_n\), one has \(p\le q\) in \(K_{n+1}\) if and only if \(p\le q\) in \(K_n\).
No element of~\(K_{n+1}\setminus K_n\) lies below an element of \(K_n\).
The remaining comparisons are
\begin{align}
p\le(x,y)
&\quad\Longleftrightarrow\quad p\le x,
\label{eq:oldnew}\\
(x,y)\le(u,v)
&\quad\Longleftrightarrow\quad
\bigl(x=u\text{ and }y\le v\bigr)
\ \text{or}\ y\le u.
\label{eq:newnew}
\end{align}
\end{proposition}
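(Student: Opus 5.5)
The plan is to work directly from the definition $a\le b \iff b\in K_{n+1}a$. I would use the parametrization of Proposition~\ref{prop:pair-parametrization} in the concrete form $(x,y)=ua_{n+1}x$ whenever $y=ux$; the pair is well defined since the first coordinate is the suffix and the second is the image under $\pi$. I would treat the four kinds of comparison in turn.

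\emph{Old--old comparisons.} For $p,q\in K_n$, comparability in $K_n$ clearly implies comparability in $K_{n+1}$. Conversely, if $q=wp$ with $w\in K_{n+1}$, applying the retraction $\pi$ gives $q=\pi(w)p$, so $p\le q$ in $K_n$.

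\emph{No new element lies below an old one.} I would use the monoid homomorphism $K_{n+1}\to(\{0,1\},\cdot)$ sending $a_i\mapsto 1$ for $i\le n$ and $a_{n+1}\mapsto 0$; it respects all the defining relations. Elements of $K_{n+1}\setminus K_n$ map to $0$ and elements of $K_n$ map to $1$. Hence $K_{n+1}\setminus K_n$ is a two-sided ideal, and no left multiple of a new element can be old.

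\emph{Comparisons \eqref{eq:oldnew}.} If $p\le x$, say $x=tp$, then $ua_{n+1}x=(ua_{n+1}t)p$, so $p\le (x,y)$. Conversely, suppose $ua_{n+1}x=zp$. By the ideal argument $z$ must contain $a_{n+1}$, so $z=z_1a_{n+1}z_2$ with $z_1,z_2\in K_n$. Then $zp=z_1a_{n+1}(z_2p)$ corresponds to the pair $(z_2p,\,z_1z_2p)$. Injectivity in Proposition~\ref{prop:pair-parametrization} forces $x=z_2p$, hence $p\le x$.

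\emph{Comparisons \eqref{eq:newnew}.} This is the main step. Write $(x,y)=sa_{n+1}x$ with $y=sx$, and $(u,v)=s'a_{n+1}u$ with $v=s'u$, and suppose $(u,v)=w\cdot(x,y)$.

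If $w\in K_n$, the product $ws\,a_{n+1}x$ has pair $(x,wy)$. So $x=u$ and $v=wy$, that is, $y\le v$.

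If $w=w_1a_{n+1}w_2$, I need the identity $a_{n+1}ra_{n+1}=a_{n+1}r$ for every $r\in K_n$. This is reduction \eqref{eq:delete-right} with $k=n+1$, valid because all letters of $r$ have smaller index. It collapses the product to $w_1a_{n+1}w_2sx=w_1a_{n+1}(w_2y)$, whose pair is $(w_2y,\,w_1w_2y)$. Hence $u=w_2y$, giving $y\le u$.

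For the reverse implications:
\begin{itemize}
\item If $x=u$ and $v=ty$, then $t\cdot(x,y)=ts\,a_{n+1}x$ has pair $(x,ty)=(u,v)$.
\item If $u=ty$ and $v=s'u$, take $w=s'a_{n+1}t$. The same collapsing identity gives $w\,s a_{n+1}x=s'a_{n+1}tsx=s'a_{n+1}u=(u,v)$.
\end{itemize}

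The only delicate point is the bookkeeping of which coordinate equals what. This is handled throughout by reading off the suffix (unchanged under canonical reduction, by the remark preceding Proposition~\ref{prop:pair-parametrization}) and the $\pi$-image.
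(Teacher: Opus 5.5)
Your proof is correct and follows essentially the paper's route: the retraction $\pi$ handles old--old comparisons, a content argument (which you make precise via the homomorphism to $\{0,1\}$) shows that no new element lies below an old one, and the identity $a_{n+1}ra_{n+1}=a_{n+1}r$ yields the paper's multiplication rules $m\cdot(x,y)=(x,my)$ and $(r,s)\cdot(x,y)=(ry,sy)$, with the same witnesses for the converse implications. One small point: when you conclude $x=z_2p$ (and likewise $u=w_2y$), you are using that the pair map is well defined on elements, not that it is injective; you already justified well-definedness at the outset, so nothing is missing.
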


\begin{proof}
The first assertion follows from the retraction \(\pi\), while the second holds because multiplication cannot remove a generator from the content of a word.
As for the remaining claims, the defining relations imply $a_{n+1}wa_{n+1}=a_{n+1}w$ for every $w\in K_n$.
Thus multiplication satisfies
\[
m\cdot (x,y)=(x,my),\qquad
(x,y)\cdot m=(xm,ym),\qquad
(r,s)\cdot(x,y)=(ry,sy).
\]
In particular, an element in $K_{n+1} \setminus K_n$ obtained via left multiplication by $p\in K_n$ has first coordinate in~$K_n p$.
Conversely, if $x=bp$ and $y=cx$, then $ca_{n+1}x=(ca_{n+1}b)p$, thus proving \eqref{eq:oldnew}.

We are left with proving \eqref{eq:newnew}.
Multiplying $(x, y)$ on the left by an element in $K_n$ preserves the first coordinate and increases the second.
Multiplying $(x, y)$ on the left by $(r,s)\in K_{n+1}$ yields an element of the form $(ry,sy)$, whose first coordinate is above $y$.
Conversely, if $y\le u\le v$, write $u=ry$ and~$v=tu$.
The comparable pair $(r,tr)$ then satisfies
\[
(r,tr)(x,y)=(u,v).
\]
All claims are then proved.
\end{proof}

\subsection{The Kiselman transform of a finite poset}
\begin{definition}
For any finite poset \(P\), its Kiselman transform is a poset whose underlying set is
\[
 \Kis(P)=P\sqcup\{(x,y)\in P^2:x\le y\},
\]
and its order relation is determined, for \(p,q\in P\) and \((x,y),(u,v)\in\Kis(P)\setminus P\), by
\begin{align*}
p\le q\text{ in }\Kis(P)
&\quad\Longleftrightarrow\quad p\le q\text{ in }P,\\
p\le(x,y)
&\quad\Longleftrightarrow\quad p\le x,\\
(x,y)\le p
&\quad\text{never holds},\\
(x,y)\le(u,v)
&\quad\Longleftrightarrow\quad
\bigl(x=u\text{ and }y\le v\bigr)\ \text{or}\ y\le u.
\end{align*}
\end{definition}

\begin{proposition}\label{prop:K-transform-poset}
The preceding rules define a partial order on \(\Kis(P)\).
In particular, the construction depends only on the poset \(P\), and not on any monoid structure it may carry.
\end{proposition}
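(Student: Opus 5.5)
We verify reflexivity, antisymmetry and transitivity directly from the four defining rules, splitting each verification according to how many of the elements involved lie in $P$ and how many in $\Kis(P)\setminus P$. The second assertion is then immediate, since the rules mention only the order of $P$.

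\emph{Reflexivity.} On $P$ it is inherited from $P$. For a pair, $(x,y)\le(x,y)$ holds via the first alternative, since $x=x$ and $y\le y$.

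\emph{Antisymmetry.} Two elements of $P$ are handled by antisymmetry in $P$, and a mixed pair $p$, $(x,y)$ cannot satisfy both inequalities because $(x,y)\le p$ never holds. So suppose $(x,y)\le(u,v)$ and $(u,v)\le(x,y)$. If the second alternative $y\le u$ holds in the first comparison, then $x\le y\le u\le v$. The reverse comparison then gives either $u=x$, or $v\le x$. In the latter case $x\le y\le u\le v\le x$ forces all four to be equal, so the pairs coincide. In the former case $x=u$, so $x\le y\le u=x$ gives $y=x$; the reverse comparison then yields $v\le y$ (either directly, or via $v\le x=y$), and together with $y\le u\le v$ this gives $v=y$. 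The remaining case, where both comparisons use the first alternative, gives $x=u$ with $y\le v\le y$, hence $v=y$.

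\emph{Transitivity.} Consider $a\le b\le c$. Since no new element lies below an old one, the possible type patterns are $(P,P,P)$, $(P,P,\mathrm{new})$, $(P,\mathrm{new},\mathrm{new})$ and $(\mathrm{new},\mathrm{new},\mathrm{new})$.
\begin{itemize}
\item $(P,P,P)$ is transitivity in $P$.
\item $p\le q\le(x,y)$ means $p\le q\le x$, so $p\le x$.
\item $p\le(x,y)\le(u,v)$: we have $p\le x$, and in either alternative $x\le u$ (either $x=u$, or $x\le y\le u$), so $p\le u$.
\item Three new elements $(x,y)\le(u,v)\le(s,t)$: if the second step uses $v\le s$, then in either case of the first step $y\le v$ (from $y\le v$ directly, or from $y\le u\le v$), hence $y\le s$. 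If the second step uses $u=s$ and $v\le t$, then either the first step has $x=u=s$ and $y\le v\le t$, or $y\le u=s$; in both cases $(x,y)\le(s,t)$.
\end{itemize}

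The key fact used throughout is that each new element $(x,y)$ satisfies $x\le y$, which gives $x\le y\le u\le v$ whenever the second alternative holds. The only mildly delicate step is the antisymmetry case that mixes the two alternatives, and the chain above handles it.
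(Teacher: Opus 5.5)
Your proof is correct and follows essentially the same direct verification as the paper, with transitivity handled identically. The paper streamlines antisymmetry by first observing that $(x,y)\le(u,v)$ forces $x\le u$ and $y\le v$, so mutual comparability gives equal coordinates at once; in your case split, the mixed case where the first comparison uses $x=u$ and the reverse uses $v\le x$ is covered only by the symmetry between the two comparisons, which you should state explicitly.
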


\begin{proof}
Reflexivity is immediate.
Notice first that
\[
(x,y)\le(u,v)\quad\Longrightarrow\quad x\le u\ \text{ and }\ y\le v.
\]
Indeed, this is clear when \(x=u\), while in the other case \(x\le y\le u\le v\).
Thus, if two pairs are mutually comparable, their respective coordinates are equal.
Together with antisymmetry in \(P\) and the absence of relations from a pair to an element of \(P\), this proves antisymmetry.

For transitivity, only chains involving pairs are not immediate.
Suppose
\[
(x,y)\le(u,v)\le(r,s).
\]
If \(x=u\), then either \(u=r\), in which case \(x=r\) and \(y\le s\), or \(v\le r\), in which case \(y\le v\le r\).
If instead \(y\le u\), then \(y\le r\): this is immediate when \(u=r\), and otherwise follows from \(y\le u\le v\le r\).
In every case \((x,y)\le(r,s)\).
Finally, if \(p\in P\) and \(p\le(x,y)\le(u,v)\), then \(p\le x\le u\), so \(p\le(u,v)\).
All remaining cases reduce directly to transitivity in \(P\).
\end{proof}

Starting from the one-point poset, the preceding section identifies $\Kis^n\{1\}$ with the suffix poset of $K_n$.
The point of the abstract definition is that all subsequent chain enumeration works for arbitrary $P$.
This is likely to find application in the asymptotic study of cardinalities of finite Hecke-Kiselman monoids, see~\cite{AD, CD}.

\section{Strict-chain vectors and an infinite matrix recursion}\label{chain_vectors}

\subsection{A generating series computation}

Let $\cv_j(P)$ be the number of chains $x_1<\cdots<x_j$ with exactly~$j$ elements.
Define the \emph{chain vector}
\[
 \cv(P)=(\cv_1(P),\cv_2(P),\ldots)^{\mathsf T}
.
\]
For $d=\height(P)$ its support is exactly $\{1,\ldots,d\}$, since every shorter chain occurs as a subset of a longest chain.
In particular $\cv_1(P)=|P|$.
The definition of $\Kis(P)$ immediately gives
\[
 \cv_1(\Kis(P)) = |\Kis(P)|=2\cv_1(P)+\cv_2(P),
\]
since the elements in $\Kis(P)$ of the form $x$ and $(x,x)$ are exactly $2\cv_1(P)$.
We aim to generalize this fact to all other coefficients $\cv_j(\Kis(P))$.

\begin{theorem}\label{thm:chain-vector}
The chain vector satisfies
\[
    \cv(\Kis(P))=\Mat\cv(P),
\]
where \(\Mat\) is the infinite matrix whose entries are
\[
    \Mat_{ij}=\binom{i+j}{2i-j}, \qquad i,j\geq 1,
\]
with the usual convention that a binomial coefficient is zero when its lower index lies outside the interval from \(0\) to its upper index.
The matrix \(\Mat\) is symmetric and admits the factorization
\[
    \Mat=\Bin^{\mathsf T}\Bin,
    \qquad \text{ where }
    \Bin_{ij}=\binom{j}{i-j}.
\]
Moreover,
\[
    \height(\Kis(P))=2\height(P).
\]
\end{theorem}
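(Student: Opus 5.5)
The plan is to classify strict chains in $\Kis(P)$ by the strict chain of $P$ they ``use'', and then count the ways each such chain arises. The first step is a structural description of a chain $z_1<\cdots<z_i$ in $\Kis(P)$. Since no pair lies below an element of $P$, it consists of an initial segment $p_1<\cdots<p_a$ in $P$ followed by pairs. By the rule defining $\Kis(P)$ and the implication $(x,y)\le(u,v)\Rightarrow x\le u,\ y\le v$ from the proof of Proposition~\ref{prop:K-transform-poset}, the pairs split into maximal blocks sharing a first coordinate:
\[
(x_1,y_{1,1}),\dots,(x_1,y_{1,m_1}),\ (x_2,y_{2,1}),\dots,\ (x_b,y_{b,m_b}).
\]
Inside a block the $y_{k,\cdot}$ are strictly increasing, and between consecutive blocks we need $y_{k,m_k}\le x_{k+1}$ with $x_k\ne x_{k+1}$.

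Reading all coordinates left to right gives a weakly increasing sequence in $P$:
\[
p_1<\cdots<p_a\le x_1\le y_{1,1}<\cdots<y_{1,m_1}\le x_2\le\cdots .
\]
Equalities may occur only at the steps $p_a\to x_1$, $x_k\to y_{k,1}$ and $y_{k,m_k}\to x_{k+1}$. There is one forbidden configuration, namely $x_k=y_{k,1}=x_{k+1}$ with $m_k=1$. Conversely, every such sequence is a chain in $\Kis(P)$. The set of distinct values occurring is a strict chain $v_1<\cdots<v_j$ in $P$. Hence $\cv_i(\Kis(P))=\sum_j N_{ij}\,\cv_j(P)$, where $N_{ij}$ is the number of admissible ``patterns''. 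A pattern is the combinatorial datum recording which positions are $p$'s, $x$'s or $y$'s, where the blocks break, and which weak steps are equalities, subject to the resulting sequence having exactly $j$ distinct values. In particular $N_{ij}$ depends only on $i,j$.

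The second and main step is to show $N_{ij}=\binom{i+j}{2i-j}$, and I expect this to be the hard part. Rather than attacking this directly, I would aim for the factorization. I would look for an intermediate object: a weakly increasing word with $k$ letters in which each of the $j$ distinct values appears once or twice, and each of the $i$ chain elements corresponds to a letter. The count $\binom{j}{k-j}$ (choose which $k-j$ of the $j$ values are doubled) then matches $\Bin_{kj}$. The chain elements read from the same word in $\binom{i}{k-i}$ ways, matching $\Bin_{ki}$. Concretely, I would try to assign to each element of $P$ its multiplicity in the coordinate sequence, which is at most $2$ because of the forbidden configuration and the strictness conditions. Dually, I would assign to each chain element its number of ``fresh'' coordinates, which is $1$ or $2$: one for a $p$, and for a pair either both coordinates new (first of a block) or only $y$ new. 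I would then check that the patterns biject with pairs of such compositions of the same word of length $k$. This gives $N_{ij}=\sum_k\Bin_{ki}\Bin_{kj}$.

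If the bijection proves awkward, the fallback is to compute the bivariate generating function $\sum N_{ij}s^it^j$ via the block decomposition. This decomposition is a geometric series for the $P$-part times a sequence of blocks, with the equality options at each junction. I would then compare the result with $\sum_k\bigl(\sum_i\binom{i}{k-i}s^i\bigr)\bigl(\sum_j\binom{j}{k-j}t^j\bigr)$.

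Finally, the identity $\sum_k\binom{i}{k-i}\binom{j}{k-j}=\binom{i+j}{2i-j}$ follows from Vandermonde after rewriting $\binom{j}{k-j}=\binom{j}{2j-k}$. This identity yields $\Mat=\Bin^{\mathsf T}\Bin$, and symmetry is immediate from $\binom{i+j}{2i-j}=\binom{i+j}{2j-i}$. For the height, $\Mat_{ij}\ne0$ requires $0\le 2i-j\le i+j$, i.e.\ $j/2\le i\le 2j$. With $\cv(P)$ supported exactly on $\{1,\dots,d\}$ and all entries nonnegative, $\cv_i(\Kis(P))>0$ iff $i\le 2d$, so $\height(\Kis(P))=2\height(P)$.
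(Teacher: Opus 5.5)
Your reduction to counting patterns over a fixed support chain is correct; it is exactly the paper's notion of the support of a chain. Your closing steps (Vandermonde, symmetry, height) also agree with the paper. The gap is the central claim $N_{ij}=\binom{i+j}{2i-j}$, which you never establish, and the bijective route you sketch rests on a false assertion.

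A value of $P$ can occur three times in your coordinate sequence: $p<(p,p)$ gives $p_a=x_1=y_{1,1}$, and $(x,y)<(y,y)$ gives $y_{k,m_k}=x_{k+1}=y_{k+1,1}$. Neither is the forbidden configuration. This breaks the correspondence already in the smallest cases:
\begin{itemize}
\item For $i=2$, $j=1$ the only chain is $v<(v,v)$. Its word $v,v,v$ has length $k=3$, whereas $\Bin_{31}=0$ and the factorization needs it at $k=2$.
\item For $i=j=2$, both $(v,v)<(w,w)$ and $(v,w)<(w,w)$ land at $k=4$, where $\Bin_{42}^2=1$. Meanwhile only three of the $\Bin_{32}^2=4$ slots at $k=3$ are filled.
\end{itemize}
So your fresh-coordinate map is not a bijection onto pairs of compositions into parts $1$ and $2$. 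It might be repairable; collapsing triple runs to double runs does fix these small cases. But you would then have to prove bijectivity in general, and that would be genuinely new: the paper states it has no conceptual explanation of the factorization $\Mat=\Bin^{\mathsf T}\Bin$ or of the symmetry of $\Mat$.

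Your fallback is precisely the paper's proof, so the missing work is to carry it out. The paper proceeds as follows.
\begin{itemize}
\item A lone diagonal pair gets weight $A=st$. Any other block gets weight $B=st^2(1+s)/(1-st)$, where the optional leading diagonal pair contributes $1$ or $s$.
\item The coincidence $y_{k,m_k}=x_{k+1}$ is handled by the factor $1+t^{-1}$ in $S=A+B+\bigl[A+(1+t^{-1})B\bigr]S$.
\item The coincidence $p_a=x_1$ is handled by the factor $(1+s)/(1-st)$ for the initial singletons.
\item This yields $\frac{1-st}{1-3st-s^2t-st^2}$, hence the recurrence $\Mat_{ij}=3\Mat_{i-1,j-1}+\Mat_{i-2,j-1}+\Mat_{i-1,j-2}$ with $\Mat_{11}=2$.
\item One checks that $\binom{i+j}{2i-j}$ satisfies the same recurrence, using $1+3z+3z^2+z^3=(1+z)^3$.
\end{itemize}
Comparing the generating function directly with $\sum_k\bigl(\sum_i\binom{i}{k-i}s^i\bigr)\bigl(\sum_j\binom{j}{k-j}t^j\bigr)$, as you propose, amounts to a Hadamard product and is less direct. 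It is simpler to identify the coefficients with $\binom{i+j}{2i-j}$ first and then deduce the factorization by Vandermonde, as you already do at the end. Your height argument is fine once the formula is established.
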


\begin{proof}
Let \(C\) be a chain in $\Kis(P) = P\sqcup\{(x,y)\in P^2:x\le y\}$.
We shall call the former elements \textbf{singletons} and the latter \textbf{pairs}.
The elements from \(P\) that appear either as singletons in \(C\) or as coordinates of pairs belonging to \(C\) form a chain in \(P\) that we call \emph{support of \(C\)}.

The number of chains in $\Kis(P)$ of length \(i\) having a given support with $j$ elements depends only on \(i\) and \(j\), and not on the choice of the support: denote this number by \(\Mat_{ij}\).
It follows immediately that
\[
    \cv_i(\Kis(P))=\sum_{j\geq 1}\Mat_{ij}\cv_j(P).
\]
It remains to compute the coefficients \(\Mat_{ij}\): we shall do so by describing the corresponding generating series $\sum_{i,j} \Mat_{ij}s^i t^j$.

We thus use \(s\) to record the number of elements of a chain in \(\Kis(P)\), and \(t\) to record the number of distinct elements in its support.
Consider first the portion of a chain consisting of pairs.
A set of consecutive pairs that have the same first coordinate will be (improperly) called a \textbf{group} of the chain.

A group consisting only of a diagonal pair \((x,x)\) contributes one element to the new chain and one element to its support, and therefore has weight
\[
    A=st.
\]
Every other group has the form
\[
    (x,y_1)<\cdots <(x,y_q),
    \qquad x<y_1<\cdots <y_q,
    \qquad q\geq 1,
\]
and may additionally contain the diagonal pair \((x,x)\) at its beginning.
The first coordinate \(x\) contributes~\(t\); each of the \(q\) non-diagonal pairs contributes \(st\); moreover, the optional diagonal pair $(x,x)$ contributes either \(1\) or \(s\).
Hence the generating series for such a group is
\[
    B=t(1+s)\sum_{q\geq 1}(st)^q
      =\frac{st^2(1+s)}{1-st}.
\]

Let \(S\) be the generating series for a nonempty sequence of groups.
Successive groups have strictly increasing first coordinates.
If a group of type \(B\) is followed by another group, its last second coordinate may either be strictly smaller than, or equal to, the first coordinate of the next group.
In the latter case the common support element has been counted twice, so one factor \(t\) must be removed.
This coincidence cannot occur after a diagonal singleton, that is, after a group of type \(A\).
Adding all contributions yields
\[
    S=A+B+\bigl[A+(1+t^{-1})B\bigr]S,
\]
and hence
\[
    S=\frac{A+B}{1-A-(1+t^{-1})B}.
\]

It remains to allow an initial chain of singletons.
Each such element contributes \(st\), so an arbitrary, possibly empty, initial chain of singletons has generating series
\[
    \frac{1}{1-st}.
\]
If pairs follow, the last singleton may coincide with the first coordinate of the first pair.
In that case it contributes a new chain element, hence a factor \(s\), but no new support element.
The two possibilities together therefore contribute
\[
    \frac{1+s}{1-st}.
\]
Including the empty chain, we obtain
\begin{equation}\label{eq:K-bivariate-series}
    1+\sum_{i,j\geq 1}\Mat_{ij}s^it^j
      =\frac{1+(1+s)S}{1-st}
      =\frac{1-st}{1-3st-s^2t-st^2}.
\end{equation}

We now extract the coefficients by an elementary recurrence.
Extend the array by setting \(\Mat_{00}=1\), \(\Mat_{i0}=\Mat_{0i}=0\) for \(i>0\), and \(\Mat_{ij}=0\) whenever one of the indices is negative.
Multiplication of \eqref{eq:K-bivariate-series} by its denominator shows that, for \(i+j\geq 3\),
\begin{equation}\label{eq:K-recurrence}
    \Mat_{ij}
      =3\Mat_{i-1,j-1}+\Mat_{i-2,j-1}+\Mat_{i-1,j-2}.
\end{equation}
The exceptional coefficient of \(st\) gives \(\Mat_{11}-3\Mat_{00}=-1\), and thus \(\Mat_{11}=2\).
Set
\[
    C_{ij}=\binom{i+j}{2i-j}.
\]
We claim that \(C_{ij}\) satisfies the same recurrence.

Set $n=i+j, r=2i-j,$ and use henceforth the notation $[z^n]f(z)$ to denote the coefficient multiplying the monomial $z^n$ in the polynomial or power series $f(z)$.
Since \(C_{ij}=[z^r](1+z)^n\), the right-hand side of \eqref{eq:K-recurrence}, with \(C\) in place of \(\Mat\), is
\begin{align*}
  &3C_{i-1,j-1}+C_{i-2,j-1}+C_{i-1,j-2} \\
  &\quad=[z^r](1+z)^{n-3}
       \bigl(3z(1+z)+z^3+1\bigr) \\
  &\quad=[z^r](1+z)^{n-3}(1+z)^3 \\
  &\quad=[z^r](1+z)^n
       =C_{ij}.
\end{align*}
Here we used only
\[
    1+3z+3z^2+z^3=(1+z)^3.
\]
Moreover, \(C_{00}=1\) and \(C_{11}=2\).
The recurrence determines the coefficients recursively by total degree, and consequently
\[
    \Mat_{ij}=C_{ij}=\binom{i+j}{2i-j}.
\]

The symmetry of \(\Mat\) follows either from this formula and
\[
    \binom{i+j}{2i-j}=\binom{i+j}{2j-i},
\]
or from the factorization below.
Indeed,
\begin{align*}
    (\Bin^{\mathsf T}\Bin)_{ij}
      &=\sum_k\binom{i}{k-i}\binom{j}{k-j} \\
      &=\binom{i+j}{2i-j}
       =\Mat_{ij},
\end{align*}
where the second equality is Vandermonde's identity.

Finally, the binomial formula shows that \(\Mat_{ij}\neq 0\) only if
\[
    2i-j\geq 0
    \qquad\text{and}\qquad
    2i-j\leq i+j,
\]
or equivalently
\[
    \frac{j}{2}\leq i\leq 2j.
\]
The last nonzero entry in column \(j\) is
\[
    \Mat_{2j,j}=\binom{3j}{3j}=1.
\]
Thus every chain of length \(j\) in \(P\) gives rise to a chain of length \(2j\) in \(\Kis(P)\), and no support of size \(j\) can produce a longer chain.
Therefore
\[
    \height(\Kis(P))=2\height(P),
\]
as claimed.
\end{proof}

The first \(9\times 9\) block of \(\Mat\) is
{\small
\[
\Mat=\left(
\begin{matrix}
2&1&0&0&0&0&0&0&0&\hdots\\
1&6&5&1&0&0&0&0&0&\\
0&5&20&21&8&1&0&0&0&\\
0&1&21&70&84&45&11&1&0&\\
0&0&8&84&252&330&220&78&14&\\
0&0&1&45&330&924&1287&1001&455&\\
0&0&0&11&220&1287&3432&5005&4368&\\
0&0&0&1&78&1001&5005&12870&19448&\\
0&0&0&0&14&455&4368&19448&48620&\\
\vdots&&&&&&&&&\ddots
\end{matrix}\right).
\]
}

Similarly, the first \(9\times9\) block of the matrix \(\Bin\) is
{\small
\[
\Bin=\left(
\begin{matrix}
1&0&0&0&0&0&0&0&0&\hdots\\
1&1&0&0&0&0&0&0&0&\\
0&2&1&0&0&0&0&0&0&\\
0&1&3&1&0&0&0&0&0&\\
0&0&3&4&1&0&0&0&0&\\
0&0&1&6&5&1&0&0&0&\\
0&0&0&4&10&6&1&0&0&\\
0&0&0&1&10&15&7&1&0&\\
0&0&0&0&5&20&21&8&1&\\
\vdots&&&&&&&&&\ddots
\end{matrix}\right).
\]
}
Thus each column of $H$ is a shifted row of Pascal's triangle, written vertically.
We have no conceptual explanation of this fact, nor of the symmetry of $M$.

\subsection{Dependence on the height}

\begin{proposition}\label{prop:height-comparison}
Let $P,Q$ be nonempty finite posets of the same height $d$.
Set
\[
 \alpha=\min_{1\leq j\leq d}\frac{\cv_j(P)}{\cv_j(Q)},\qquad
 \beta=\max_{1\leq j\leq d}\frac{\cv_j(P)}{\cv_j(Q)}.
\]
Then $0<\alpha\leq\beta$ and, for every $n\geq0$,
\begin{equation}\label{bounded1}
 \alpha|\Kis^nQ|\leq|\Kis^nP|\leq\beta|\Kis^nQ|.
\end{equation}
In particular,
\begin{equation}\label{bounded2}
 2^{-n/2}\bigl(\lgTwo|\Kis^nP|-\lgTwo|\Kis^nQ|\bigr)\longrightarrow0.
\end{equation}
Along either parity, or any other choice of a subsequence, the normalized logarithms converge for $P$ if and only if they converge for $Q$, and their limits then agree.
\end{proposition}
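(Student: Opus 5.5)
The plan is to exploit that $\Mat$ has nonnegative entries and that the support of the chain vector depends only on the height. Since $P$ and $Q$ both have height $d$, the supports of $\cv(P)$ and $\cv(Q)$ are both exactly $\{1,\ldots,d\}$, as observed before Theorem~\ref{thm:chain-vector}. All ratios $\cv_j(P)/\cv_j(Q)$ for $1\le j\le d$ are therefore well defined and strictly positive, which gives $0<\alpha\le\beta$. The defining property of $\alpha,\beta$ then reads, entrywise,
\[
\alpha\,\cv(Q)\le\cv(P)\le\beta\,\cv(Q),
\]
where the inequality holds trivially in the coordinates $j>d$, since both sides vanish there.

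Next I would iterate Theorem~\ref{thm:chain-vector}, which gives $\cv(\Kis^nP)=\Mat^n\cv(P)$ and $\cv(\Kis^nQ)=\Mat^n\cv(Q)$. Every entry $\Mat_{ij}=\binom{i+j}{2i-j}$ is nonnegative, so $\Mat^n$ preserves entrywise inequalities between vectors with finite support. There are no convergence issues, because each row sum involves only finitely many nonzero terms of a finitely supported vector. Applying $\Mat^n$ to the displayed inequalities and reading off the first coordinate, using $\cv_1=|\cdot|$, yields \eqref{bounded1}.

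For \eqref{bounded2}, I would take $\lgTwo$ of \eqref{bounded1} to get
\[
\lgTwo\alpha\le \lgTwo|\Kis^nP|-\lgTwo|\Kis^nQ|\le\lgTwo\beta .
\]
The difference is thus bounded independently of $n$, so multiplying by $2^{-n/2}\to0$ gives the limit $0$. The final statement about subsequences follows at once: along any subsequence, the two normalized sequences differ by a quantity tending to $0$, so one converges if and only if the other does, and then to the same limit.

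There is no genuine obstacle here. The only point needing care is positivity of $\alpha$, which requires that $\cv_j(Q)\neq0$ and $\cv_j(P)\neq0$ for all $j\le d$. This is exactly where the equal-height hypothesis, together with the fact that every $j$-subchain of a longest chain is a chain, is used.
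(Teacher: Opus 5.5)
Your proposal is correct and follows the paper's proof essentially step by step. Equal heights force both chain vectors to have support exactly $\{1,\ldots,d\}$, so $\alpha\cv(Q)\le\cv(P)\le\beta\cv(Q)$ holds coordinatewise; applying the nonnegative matrix $\Mat^n$ and reading off first coordinates gives the two-sided bound on $|\Kis^nP|$, and the bounded logarithmic difference then yields the limit and the subsequence statement.
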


\begin{proof}
Both the chain vector $\cv(P)$ and $\cv(Q)$ have positive entries precisely in positions $1,\ldots,d$.
This implies that $\alpha\cv(Q)\leq\cv(P)\leq\beta\cv(Q)$ coordinatewise.
Applying the nonnegative matrix $\Mat^n$ yields
\[
\alpha\cv(\Kis^n(Q))\leq\cv(\Kis^n(P))\leq\beta\cv(\Kis^n(Q)),
\]
and taking first coordinates proves \eqref{bounded1}.
Thus
\[
\alpha \leq \frac{|\Kis^n(P)|}{|\Kis^n(Q)|}\leq \beta,
\]
whence \eqref{bounded2} follows immediately.
\end{proof}
The asymptotic problem for every finite poset of height $d$ reduces to a single model.
It can be shown that, as in the Kiselman case, the limits
\begin{equation}
\begin{aligned}
   \ceven^P
   &:=\lim_{n\to\infty}2^{-n}\lgTwo|\Kis^{2n}(P)|,\\
   \codd^P
   &:=\lim_{n\to\infty}2^{-n-\frac12}\lgTwo|\Kis^{2n+1}(P)|,
\end{aligned}
\end{equation}
always exist, but we will not pursue this claim here.

\section{The limits $\ceven$ and $\codd$ are distinct}\label{explicit_bound}

\subsection{From coefficient norms to generating functions}\label{Section:generating}

If \(v=(v_0,v_1,\ldots)^{\mathsf T}\) is a finitely supported vector, write
\[
   \widehat v(s)=\sum_{j\geq0}v_js^j
\]
for its coefficient polynomial.
We keep separate notation for a vector and its polynomial, since both norms of vectors and evaluations of polynomials occur below.

As in the previous section, we extend the array by setting \(\Mat_{00}=\Bin_{00}=1\), and \(\Mat_{ij}=\Bin_{ij} = 0\) when at leat one of the indices $i,j$ is nonpositive but they do not both vanish.
This preserves $\Mat=\Bin^{\mathsf T}\Bin$.
Write $e_j$ for the $j$-th coordinate vector; in particular, $\widehat{e}_1(s) = s$.

Put $h(s)=s+s^2$.
The binomial theorem gives directly
\begin{equation}\label{eq:Bin-action}
 \widehat{\Bin v}(s)
 =\sum_{j\geq0}v_j\sum_{r=0}^j\binom jr s^{j+r}
 =\sum_{j\geq0}v_j h(s)^j
 =\widehat v(h(s)).
\end{equation}
To compute the transpose, put, for $s>0$,
\[
 \delta(s)=\sqrt{s^2+4s},\qquad
 \lambda_\pm(s)=\frac{s\pm\delta(s)}2.
\]
The polynomial $T_j(s)=\sum_{i\geq0}\binom{i}{j-i}s^i$ satisfies $T_0=1$, $T_1=s$ and $T_j=sT_{j-1}+sT_{j-2}$ for $j\geq2$, by Pascal's identity.
Since $\lambda_\pm(s)$ are the roots of $X^2-sX-s=0$, it follows that
\begin{equation}\label{eq:Fibonacci-polynomial}
 \widehat{\Bin^{\mathsf T}e_j}(s)=T_j(s)
 =\frac{\lambda_+(s)^{j+1}-\lambda_-(s)^{j+1}}{\delta(s)}.
\end{equation}
By linearity,
\begin{equation}\label{eq:transpose}
 \widehat{\Bin^{\mathsf T}v}(s)
 =\frac{\lambda_+(s)\widehat v(\lambda_+(s))
       -\lambda_-(s)\widehat v(\lambda_-(s))}{\delta(s)}.
\end{equation}
Define
\[
 \Phi_\pm(s)=h(\lambda_\pm(s))
 =\frac{s^2+3s\pm(s+1)\sqrt{s^2+4s}}2.
\]
Combining the two actions and using $\Mat=\Bin^{\mathsf T}\Bin$ gives
\begin{equation}\label{eq:Mat-action}
 \widehat{\Mat v}(s)
 =\frac{\lambda_+(s)}{\delta(s)}\widehat v(\PhiP(s))
  -\frac{\lambda_-(s)}{\delta(s)}\widehat v(\PhiM(s)).
\end{equation}

Since $K_0$ is the one-point poset, $\cv(K_0)=\unit$.
Set
\[
   u_n:=\Mat^n\unit=\cv(K_n),\qquad
   f_n(s):=\widehat{u_n}(s)
          =\sum_{j\geq1}\cv_j(K_n)s^j.
\]
The first coordinate counts points, so
\begin{equation}\label{eq:A-matrix}
 |K_n|=\unit^{\mathsf T}\Mat^n\unit.
\end{equation}
The height formula in Theorem~\ref{thm:chain-vector} gives \(\height(K_n)=2^n\).
A longest chain contains subchains of every smaller positive cardinality, so
\[
   \supp(u_n)=\{1,\ldots,2^n\}.
\]
Thus \(f_n\) has degree \(2^n\), its coefficients in degrees \(1,\ldots,2^n\) are positive, and \(f_0(s)=s\).

We first express the cardinalities in terms of Euclidean $\ell^2$-norms.
Since \(\Mat\) is symmetric,
\begin{equation}\label{eq:even-norm}
 |K_{2n}|
 =\unit^{\mathsf T}\Mat^{2n}\unit
 =(\Mat^n\unit)^{\mathsf T}(\Mat^n\unit)
 =\norm{u_n}_2^2.
\end{equation}
Similarly, using \(\Mat=\Bin^{\mathsf T}\Bin\),
\begin{equation}\label{eq:odd-norm}
 |K_{2n+1}|
 =\unit^{\mathsf T}\Mat^{2n+1}\unit
 =u_n^{\mathsf T}\Mat u_n
 =u_n^{\mathsf T}\Bin^{\mathsf T}\Bin u_n
 =\norm{\Bin u_n}_2^2.
\end{equation}

If \(v\) has nonnegative entries and support of cardinality at most \(D\), then Cauchy--Schwarz on the left and positivity on the right give the following inequalities between the $\ell^1$ and the $\ell^2$ norms:
\begin{equation}\label{eq:l1-l2-comparison}
   D^{-1}\norm{v}_1^2\leq\norm{v}_2^2\leq\norm{v}_1^2.
\end{equation}

Since the coefficients of \(f_n\) are nonnegative,
\[
   \norm{u_n}_1=\sum_j(u_n)_j=f_n(1).
\]
Applying \eqref{eq:l1-l2-comparison} with \(D=2^n\) to \eqref{eq:even-norm} gives
\begin{equation}\label{eq:even-l1-l2}
   2^{-n}f_n(1)^2\leq |K_{2n}|\leq f_n(1)^2.
\end{equation}

By \eqref{eq:Bin-action}, the coefficient polynomial of \(\Bin u_n\) is
\[
   \widehat{\Bin u_n}(s)=f_n(s+s^2).
\]
It has nonnegative coefficients and degree \(2^{n+1}\); hence
\[
   \norm{\Bin u_n}_1=\widehat{\Bin u_n}(1)=f_n(2).
\]
Applying \eqref{eq:l1-l2-comparison} with \(D=2^{n+1}\) to \eqref{eq:odd-norm} yields
\begin{equation}\label{eq:odd-l1-l2}
   2^{-n-1}f_n(2)^2\leq |K_{2n+1}|\leq f_n(2)^2.
\end{equation}

We now formally define the parity constants mentioned in the introduction:
\begin{equation}\label{eq:parity-constants}
\begin{aligned}
   \ceven
   &:=\lim_{n\to\infty}2^{-n}\lgTwo|K_{2n}|,\\
   \codd
   &:=\lim_{n\to\infty}2^{-n-\frac12}\lgTwo|K_{2n+1}|.
\end{aligned}
\end{equation}
These are precisely the limits of \(2^{-m/2}\lgTwo|K_m|\) along even and odd \(m\), respectively.

For \(s\geq1\), set
\begin{equation}\label{eq:constants-g}
   \esc(s)=\lim_{n\to\infty}2^{-n}\lgTwo f_n(s),
\end{equation}
provided that the limit exists.
Taking logarithms in \eqref{eq:even-l1-l2} and \eqref{eq:odd-l1-l2}, the respective errors are at most \(n\) and~\(n+1\); after the normalizations in \eqref{eq:parity-constants}, both vanish.
Consequently, once the limits in \eqref{eq:constants-g} are shown to exist at \(s=1,2\), we have
\begin{equation}\label{eq:constants-from-g}
   \ceven=2\esc(1),\qquad
   \codd=\sqrt2\,\esc(2).
\end{equation}

\subsection{A scalar iteration and its error bound}
Applying~\eqref{eq:Mat-action} to $u_n$ gives
\begin{equation}\label{eq:operator}
 f_{n+1}(s)=
 \frac{\lambda_+(s)}{\delta(s)}f_n(\PhiP(s))
 - \frac{\lambda_-(s)}{\delta(s)}f_n(\PhiM(s)).
\end{equation}

\begin{proposition}\label{prop:escape}
For $s\ge1$, the limit in \eqref{eq:constants-g} exists and equals
\[
 \esc(s)=\lim_{n\to\infty}2^{-n}\lgTwo\PhiP^{\circ n}(s),
\]
where $\PhiP^{\circ n}$ denotes $n$-fold composition of $\PhiP$.
More precisely,
\begin{equation}\label{eq:tail}
 0\le\esc(s)-2^{-n}\lgTwo \PhiP^{\circ n}(s)
 \le2^{-n}\lgTwo\left(1+\frac4{\PhiP^{\circ n}(s)}\right),
\end{equation}
for every $n$.
\end{proposition}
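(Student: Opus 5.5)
The plan has two parts. First I show directly that the normalized iterates $a_n=2^{-n}\lgTwo\PhiP^{\circ n}(s)$ converge, with the tail bound \eqref{eq:tail}. Then I sandwich $f_n(s)$ between $\PhiP^{\circ n}(s)$ and a constant-power multiple of it, using the scalar iteration \eqref{eq:operator}.

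\emph{Step 1: the scalar iteration.} Write $x_n=\PhiP^{\circ n}(s)$, with $x_0=s\ge1$. The bounds $s\le\sqrt{s^2+4s}\le s+2$ give
\[
s^2\le s^2+2s\le \PhiP(s)\le s^2+3s+1.
\]
Hence $x_{n+1}\ge x_n^2$, so $a_n$ is nondecreasing, and $x_n\ge1$ for all $n$.

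The upper bound also gives $\PhiP(s)+4\le s^2+3s+5\le(s+4)^2$. Therefore $b_n=2^{-n}\lgTwo(x_n+4)$ is nonincreasing. Since $a_n\le b_n$, both sequences are monotone and bounded, and
\[
b_n-a_n=2^{-n}\lgTwo(1+4/x_n)\to0 .
\]
So they share a common limit $L(s)$, and $a_n\le L(s)\le b_n$ is exactly the inequality \eqref{eq:tail}. It remains to show that $2^{-n}\lgTwo f_n(s)\to L(s)$.

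\emph{Step 2: upper bound for $f_n$.} For $s>0$, $\lambda_-(s)$ is a root of $X^2-sX-s$. This polynomial is $1$ at $X=-1$ and $-s$ at $X=0$, so $\lambda_-\in(-1,0)$. Consequently $\PhiM(s)=\lambda_-(1+\lambda_-)\in[-\tfrac14,0)$.

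Since $f_n$ has nonnegative coefficients, $|f_n(\PhiM(s))|\le f_n(\tfrac14)\le f_n(\PhiP(s))$, the last step because $\PhiP(s)\ge1$. The two coefficients in \eqref{eq:operator} satisfy $\lambda_+/\delta+|\lambda_-|/\delta=(\lambda_+-\lambda_-)/\delta=1$. Hence
\[
f_{n+1}(s)\le f_n(\PhiP(s)),
\]
and iterating (all arguments stay $\ge1$) gives $f_n(s)\le f_0(x_n)=x_n$.

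\emph{Step 3: lower bound, the main obstacle.} The difficulty is the subtracted term in \eqref{eq:operator}, whose sign is not controlled. I will exploit that every monomial of $f_n$ has degree at least $1$. Then $f_n(y)/y$ is nondecreasing for $y>0$, so
\[
f_n(\tfrac14)\le \frac{1}{4\PhiP(s)}\,f_n(\PhiP(s))\le\frac1{12}\,f_n(\PhiP(s)),
\]
using $\PhiP(s)\ge3$ for $s\ge1$.

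Since $\lambda_+\ge|\lambda_-|$ and their normalized sum is $1$, we have $\lambda_+/\delta\ge\tfrac12$ and $|\lambda_-|/\delta\le\tfrac12$. This yields
\[
f_{n+1}(s)\ge\Bigl(\tfrac12-\tfrac1{24}\Bigr)f_n(\PhiP(s))=\kappa\, f_n(\PhiP(s)),\qquad \kappa=\tfrac{11}{24}.
\]
Iterating, and noting that the argument stays $\ge1$ so the same constant applies at every step, gives $f_n(s)\ge\kappa^n x_n$.

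\emph{Conclusion.} Combining Steps 2 and 3,
\[
a_n+2^{-n}n\lgTwo\kappa\;\le\;2^{-n}\lgTwo f_n(s)\;\le\;a_n .
\]
Since $2^{-n}n\to0$, the limit \eqref{eq:constants-g} exists and equals $L(s)=\lim_n 2^{-n}\lgTwo\PhiP^{\circ n}(s)$, which proves the proposition.
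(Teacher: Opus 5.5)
Your proof is correct and has the same architecture as the paper's: a one-step sandwich $c\,f_n(\PhiP(s))\le f_{n+1}(s)\le f_n(\PhiP(s))$ read off from \eqref{eq:operator} and iterated along $\PhiP^{\circ n}(s)$, combined with quadratic bounds on $\PhiP$ for convergence and the tail. For the tail you use the nonincreasing sequence $2^{-n}\lgTwo(x_n+4)$, while the paper sums the increments of $2^{-j}\lgTwo z_j$, which is equivalent. The one real deviation, your Step~3 trick with $|\PhiM|\le\frac14$, $f_n(0)=0$ and monotonicity of $f_n(y)/y$, is unnecessary. Had you kept the exact weights instead of bounding each by $\frac12$, the crude estimate $|f_n(\PhiM(s))|\le f_n(\PhiP(s))$ already gives $f_{n+1}(s)\ge\frac{\lambda_+(s)+\lambda_-(s)}{\delta(s)}f_n(\PhiP(s))=\sqrt{s/(s+4)}\,f_n(\PhiP(s))\ge5^{-1/2}f_n(\PhiP(s))$, which is exactly how the paper proceeds.
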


\begin{proof}
From their explicit expressions,
\[
   \PhiP(s)+\PhiM(s)=s^2+3s,
   \qquad \PhiP(s)\PhiM(s)=-s.
\]
Thus \(\Phi_\pm(s)\) are the roots of the equation
\[
 x^2-(s^2+3s)x-s=0.
\]
Writing \(A=s^2+3s\), the positive root satisfies
\begin{equation}\label{eq:root-identity}
   \PhiP(s)=A+\frac{s}{\PhiP(s)}.
\end{equation}
Since \(\PhiP(s)>A\),
\[
   0<\frac{s}{\PhiP(s)}<\frac{s}{A}=\frac1{s+3}<1.
\]
Thus \(\PhiP(s)>s\), while the product relation gives
\[
   \PhiM(s)=-\frac{s}{\PhiP(s)},\qquad |\PhiM(s)|<1.
\]
Moreover, \eqref{eq:root-identity} gives
\begin{equation}\label{eq:quadratic-bounds}
 s^2\le\PhiP(s)\le s^2+4s.
\end{equation}
Indeed, \(\PhiP>A\geq s^2\), whereas \(\PhiP<A+1\leq s^2+4s\).

Since \(f_n\) has nonnegative coefficients and \(|\PhiM(s)|<\PhiP(s)\),
\[
   |f_n(\PhiM(s))|
   \leq f_n(|\PhiM(s)|)
   \leq f_n(\PhiP(s)).
\]
The weights in \eqref{eq:operator} are positive, have sum one, and their difference satisfies
\[
   \frac{\lambda_+(s)}{\delta(s)}+\frac{\lambda_-(s)}{\delta(s)}=\frac{s}{\delta(s)}
   =\sqrt{\frac{s}{s+4}}\geq\frac1{\sqrt5}
\]
for \(s\geq1\).
The triangle and reverse triangle inequalities in \eqref{eq:operator} therefore give
\[
 \frac1{\sqrt5}f_n(\PhiP(s))\le f_{n+1}(s)\le f_n(\PhiP(s)).
\]
Iterating this inequality \(n\) times and using \(f_0(s)=s\) yields
\begin{equation}\label{eq:comparison}
 5^{-n/2}\PhiP^{\circ n}(s)
 \le f_n(s)\le\PhiP^{\circ n}(s).
\end{equation}

Let $z_j(s)=\PhiP^{\circ j}(s)$ and $b_j(s)=2^{-j}\lgTwo z_j(s)$.
By \eqref{eq:quadratic-bounds},
\[
   z_j(s)^2\leq z_{j+1}(s)
   \leq z_j(s)^2\left(1+\frac4{z_j(s)}\right).
\]
Taking base-\(2\) logarithms and multiplying by \(2^{-j-1}\) gives
\[
 0\le b_{j+1}(s)-b_j(s)
 \le 2^{-j-1}\lgTwo\left(1+\frac4{z_j(s)}\right).
\]
Since \(z_j\) is increasing, summing from \(j=n\) onward and using \(\sum_{j\geq n}2^{-j-1}=2^{-n}\) proves convergence and \eqref{eq:tail}.
Finally, by \eqref{eq:comparison} the two logarithms differ by at most \(\frac n2\lgTwo 5=o(2^n)\).
Therefore
\[
 \lim_{n\to\infty}2^{-n}\lgTwo f_n(s)
 =
 \lim_{n\to\infty}2^{-n}\lgTwo z_n(s),
\]
which proves the claimed expression for \(\esc(s)\).
\end{proof}

\subsection{Separation of the constants}

\begin{theorem}
The constants defined in \eqref{eq:parity-constants} satisfy $\codd<\ceven$.
More precisely,
\[
 \codd< 2.5062576 < 2.5062593 <\ceven.
\]
\end{theorem}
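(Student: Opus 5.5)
We combine \eqref{eq:constants-from-g} with Proposition~\ref{prop:escape}. By \eqref{eq:constants-from-g}, $\ceven=2\esc(1)$ and $\codd=\sqrt2\,\esc(2)$. Proposition~\ref{prop:escape} gives, for every fixed $n$, a rigorous two-sided enclosure
\[
 2^{-n}\lgTwo z_n(s)\le\esc(s)\le 2^{-n}\lgTwo z_n(s)+2^{-n}\lgTwo\Bigl(1+\tfrac{4}{z_n(s)}\Bigr),
 \qquad z_n(s)=\PhiP^{\circ n}(s).
\]
It therefore suffices to fix a modest $n$ and certify numerically that
\[
 2\cdot2^{-n}\lgTwo z_n(1)>2.5062593
 \quad\text{and}\quad
 \sqrt2\bigl(2^{-n}\lgTwo z_n(2)+2^{-n}\lgTwo(1+4/z_n(2))\bigr)<2.5062576 .
\]
The tail term is at most roughly $2^{-n}\cdot 4/(z_n\ln 2)$, and $z_n$ grows doubly exponentially (by \eqref{eq:quadratic-bounds}, $z_n(s)\ge s^{2^n}$, and much faster in practice since $z_1(1)\approx 4.2$). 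So already $n$ around $5$ or $6$ makes the tail far below the needed gap of about $10^{-6}$.

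The practical issue is that $z_n$ itself becomes astronomically large, so we should not iterate $\PhiP$ directly. Instead we track $b_j=2^{-j}\lgTwo z_j$, or equivalently $\ln z_j$, via a stable recursion. Using \eqref{eq:root-identity},
\[
 z_{j+1}=z_j^2+3z_j+\frac{z_j}{z_{j+1}},
\]
so
\[
 \ln z_{j+1}=2\ln z_j+\ln\Bigl(1+\frac{3}{z_j}+\frac{1}{z_jz_{j+1}}\Bigr).
\]
This gives
\[
 \esc(s)=\lgTwo s+\sum_{j\ge0}2^{-j-1}\lgTwo\Bigl(1+\frac{3}{z_j}+\frac{1}{z_jz_{j+1}}\Bigr).
\]
The series converges extremely fast. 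We compute the first few terms in interval arithmetic, starting from the exact algebraic values $z_1(1)=(4+2\sqrt5)/2=2+\sqrt5$ and $z_1(2)=5+3\sqrt3$. The remaining tail is bounded by $2^{-n}\lgTwo(1+4/z_n)$ as in \eqref{eq:tail}.

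The main obstacle is purely quantitative. The two constants differ only by about $2\times10^{-6}$, and $\codd$ carries the irrational factor $\sqrt2$. Every step must therefore be done with certified rounding: outward-rounded interval arithmetic or rational bounds on $\sqrt5$, $\sqrt3$, $\sqrt2$ and the logarithms, at roughly 15 correct digits. This margin is comfortably achievable, but it must be made explicit, for instance by tabulating the enclosures of $z_j(1)$ and $z_j(2)$ for $j\le n$. Once the enclosures $\ceven\in[2.5062593\ldots,\,\cdot\,]$ and $\codd\in[\,\cdot\,,\,2.5062576]$ are certified, the theorem follows immediately. As a consistency check, the midpoint difference should reproduce the value $1.9945\ldots\times10^{-6}$ stated in the abstract.
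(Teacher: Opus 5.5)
Your proposal is correct and follows the paper's proof: combine $\ceven=2\esc(1)$, $\codd=\sqrt2\,\esc(2)$ with the enclosure \eqref{eq:tail} at a fixed iteration depth and certify the numbers. The paper simply takes $n=4$ and iterates $\PhiP$ directly, obtaining $1085610<z_4(1)<1085611$ and $343341201<z_4(2)<343341202$; these values are small, so your log-domain series is a harmless but unnecessary refinement, and the paper also exhibits these explicit enclosures where your outline leaves the computation to be done. One correction: the margins that matter are $2.5062576-\codd\approx1.4\times10^{-8}$ and $\ceven-2.5062593\approx3\times10^{-7}$, not the $2\times10^{-6}$ gap between the constants, though $n=5$ or $6$ with about $15$ digits still covers them comfortably.
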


\begin{proof}
For $s=1,2$, set
\[
 z_0(s)=s,\qquad z_{j+1}(s)=\PhiP(z_j(s)),\qquad
 B_4(s)=\frac1{16}\lgTwo z_4(s).
\]
Four iterations will suffice.
By \eqref{eq:tail} and $\ln(1+x)\leq x$,
\[
 0\leq \esc(s)-B_4(s)
 \leq \frac1{16}\lgTwo\left(1+\frac4{z_4(s)}\right)
 \leq \frac1{4z_4(s)\ln2}.
\]
In particular, $2B_4(1)$ is a lower bound for $\ceven$.
To separate the constants, we therefore only need to bound the error in the odd approximation.

Let us evaluate the four-step approximations.
Iterating
\[
 \Phi(x)=\frac{x^2+3x+(x+1)\sqrt{x^2+4x}}2
\]
with sufficient precision, we obtain
\[
 1085610<z_4(1)<1085611,\qquad
 343341201<z_4(2)<343341202.
\]
The former estimate implies
\[
 \ceven\geq 2B_4(1)
 >\frac18\lgTwo1085610
 >2.5062593,
\]
whereas the lower bound from the latter gives
\[
0\le \codd-\sqrt2\,B_4(2)
\le \frac{\sqrt2}{16}
\log_2\!\left(1+\frac4{z_4(2)}\right)
<\frac{\sqrt2}{4\cdot343341201\ln2}
<1.49\cdot10^{-9}
\]
yielding
\[
 \codd
 <\sqrt2\,B_4(2)+1.49\cdot10^{-9}
 <\frac{\sqrt2}{16}\lgTwo343341202+1.49\cdot10^{-9}
 <2.5062576.
\]
\end{proof}

The numerical precision required here is modest.
To see why errors remain controlled, set~$t=\lambda_+(x)$, so that
\[
 x=\frac{t^2}{t+1},\qquad \PhiP(x)=t(t+1).
\]
It follows that
\[
 \frac{d\ln\PhiP(x)}{d\ln x}
 =\frac{2t+1}{t+2}<2,
\]
and hence
\[
 |\ln\PhiP(x)-\ln\PhiP(y)|
 \leq2|\ln x-\ln y|
 \qquad(x,y>0).
\]
Thus a relative error is amplified by at most a factor of two at each step, while the final logarithm is divided by $16$.
Iterating the approximation six times instead of four yields the more precise estimate
\begin{align*}
 2B_6(1)&=2.506259580995114968521706627901\ldots,\\
 \sqrt2 B_6(2)&=2.506257586488454455996719227420\ldots.
\end{align*}
The rational enclosures for these approximations have width less than $2\cdot10^{-49}$.
Adding the scaled tails gives
\begin{align*}
 2.50625958099511496852170662
 &<\ceven<
 2.50625958099511496852170676,\\
 2.50625758648845445599671922
 &<\,\codd\,\!<
 2.50625758648845445599671924,
\end{align*}
hence
\[
 \ceven-\codd=0.00000199450666051252\ldots.
\]
In the files attached to this arXiv submission, we also supply a Python script certifying the above bounds.

\section{Computation of $|K_n| \bmod 4$}

Recall that Andren\v{s}ek proved in~\cite{An26Zero} that $n$ and $|K_n|$ have opposite parity.
In this section we compute $|K_n|\bmod4$ by showing that $f_m(s)\bmod2$, for $m>0$, is obtained by adding pairs of consecutive powers of $s$ to the monomial $s$ (respectively $s^2$) when $m$ is even (resp.\ odd).

The computation of $|K_n|$ then follows from the fact that $|K_{2m}|$ and $|K_{2m+1}|$ coincide with the squared~$\ell^2$-norms of $u_m$ and $\Bin u_m$, respectively, and that $x^2\bmod4$ is determined by $x\bmod2$.

\begin{lemma}\label{lemma:chain-vector-mod-two}
For every $n\geq0$, the chain polynomial of $K_n$ satisfies
\[
 f_n(s)\equiv
 1+\sum_{\substack{0\leq i\leq n\\ \binom ni\text{ odd}}}
 \bigl(s^{2^i-1}+s^{2^i}\bigr)
 \pmod2.
\]
\end{lemma}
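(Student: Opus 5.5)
We will prove the statement by induction on $n$, after reformulating it mod $2$. Use the extended indexing of Section~\ref{Section:generating}, where $\Mat_{00}=1$ and $\Mat_{0j}=\Mat_{j0}=0$ for $j>0$, so that $\Mat e_0=e_0$. The polynomial $1+f_n(s)$ is then the coefficient polynomial of $\Mat^n(e_0+e_1)$. For $i\geq0$ set $g_i=e_{2^i-1}+e_{2^i}$, so that $\widehat{g_i}(s)=s^{2^i-1}+s^{2^i}$. The lemma says that, with the constant $1$ moved to the left-hand side (signs are irrelevant mod $2$),
\[
 \Mat^n(e_0+e_1)\equiv\sum_{\binom ni\ \text{odd}} g_i \pmod 2 .
\]
For $n=0$ this holds because $e_0+e_1=g_0$. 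As a sanity check, the case $n=1$ reads $e_0+2e_1+e_2\equiv g_0+g_1$.

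The inductive step reduces to the single identity
\begin{equation*}
 \Mat g_i\equiv g_i+g_{i+1}\pmod 2\qquad(i\geq0).
\end{equation*}
Granting it, we get $\Mat\sum_{i\in S_n}g_i\equiv\sum_{i\in S_n}(g_i+g_{i+1})$, where $S_n=\{i:\binom ni\text{ odd}\}$. The coefficient of $g_i$ on the right is then $\binom ni+\binom n{i-1}=\binom{n+1}i \pmod 2$, which is exactly the required statement for $n+1$. Here we use that the $g_i$ are linearly independent: they have disjoint supports for $i\geq1$, and $g_0=e_0+e_1$ overlaps none of the others. The case $i=0$ of the identity is the direct check $\Mat g_0=e_0+2e_1+e_2\equiv g_0+g_1$.

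For $i\geq1$ we must show the following: the integer $\Mat_{k,2^i-1}+\Mat_{k,2^i}$ is odd exactly when $k\in\{2^i-1,\,2^i,\,2^{i+1}-1,\,2^{i+1}\}$. Write $\Mat_{kj}=\binom{a+b}{a}$ with $a=2k-j$ and $b=2j-k$, both of which must be nonnegative, so $j/2\le k\le 2j$. By Lucas' theorem, $\Mat_{kj}$ is odd iff $a$ and $b$ have disjoint binary supports.

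The case $j=2^i$ gives $a=2k-2^i$ and $b=2^{i+1}-k$. The case $j=2^i-1$ gives $a=2k-2^i+1$ and $b=2^{i+1}-2-k$. We will analyse which $k$ in the range $[2^{i-1},2^{i+1}]$ give disjoint supports in each case, and then check that the two resulting sets of $k$ have symmetric difference precisely the four listed values. Write $k=2^{i-1}+m$ or $k=2^{i}+m$ according to whether $k<2^i$ or $k\ge 2^i$. Then $a$ and $b$ become simple affine expressions in $m$, with $b$ a complement-type quantity such as $2^i-1-m$ or $2^{i}-m$. The disjointness condition should then collapse to an explicit statement, for instance that $2m$ and $2^i-m$ share no bits, and we expect the solutions to pair up between $j=2^i-1$ and $j=2^i$, cancelling except at the endpoints.

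An alternative route, which we would try if the bit analysis gets messy, uses the generating function \eqref{eq:K-bivariate-series} reduced mod $2$:
\[
 \frac{1+st}{1+st(1+s+t)}.
\]
The column combination for $g_i$ is the coefficient of $t^{2^i-1}+t^{2^i}$ in this series, and Frobenius ($x^{2^i}$ is additive mod $2$) could be used to extract it.

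The main obstacle is this parity identity for $\Mat g_i$. The induction and the reformulation are routine once it is established.
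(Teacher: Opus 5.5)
Your reformulation, the base cases and the Pascal-type induction are correct. They are exactly the skeleton of the paper's proof, with your $g_i$ playing the role of the paper's $A_i$. The gap is that the one nontrivial step, $\Mat g_i\equiv g_i+g_{i+1}\pmod 2$ for $i\geq1$, is never proved. The Lucas paragraph sets up $a,b$, then only says the disjointness condition ``should collapse'' and that solutions are ``expected'' to pair up; no pairing is exhibited.

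This is real work, not bookkeeping. Already for $i=2$, the odd entries of columns $4$ and $3$ of $\Mat$ occur at $k\in\{2,3,6,7,8\}$ and $k\in\{2,4,6\}$. Neither parity pattern is simple by itself; only their symmetric difference $\{3,4,7,8\}$ is. The survivors $3,4$ also lie inside the range, so this is not an endpoint effect. A minor slip: $g_0=e_0+e_1$ does overlap $g_1=e_1+e_2$. This is harmless, since the induction is just a re-indexing of formal sums and needs no linear independence.

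The generating-function fallback you mention closes the gap quickly, and it is the paper's argument.
\begin{itemize}
\item Clearing the denominator of your mod-$2$ series, the columns $Q_j=\widehat{\Mat e_j}$ satisfy $Q_0=1$, $Q_1=s^2$, and $Q_j=(s^2+s)Q_{j-1}+sQ_{j-2}$ for $j\geq2$.
\item Set $Q_{-1}=1$. The sums $R_j=Q_j+Q_{j-1}$ then obey the same recurrence for $j\geq2$, with $R_0=0$ and $R_1=(1+s)^2$.
\item Let $\alpha,\beta$ be the (distinct) roots of $X^2+(s^2+s)X+s$ over $\mathbb F_2(s)$. Then $R_j=(1+s)^2(\alpha^j+\beta^j)/(\alpha+\beta)$.
\item For $j=2^i$, Frobenius gives $\alpha^j+\beta^j=(\alpha+\beta)^j$, hence
\[
 \widehat{\Mat g_i}=R_{2^i}=(1+s)^2(s^2+s)^{2^i-1}=s^{2^i-1}(1+s)\bigl(1+s^{2^i}\bigr)=\widehat{g_i}+\widehat{g_{i+1}}.
\]
\end{itemize}
This also covers $i=0$. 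With this step inserted, your proof is complete.
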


\begin{proof}
Work over $\mathbb F_2 = \mathbb Z/2\mathbb Z$, identifying coefficient vectors with their polynomials.
Recall that in \eqref{eq:Mat-action} we considered the linear map
\[
{\MM}: \widehat{v}(s) \mapsto \widehat{\Mat v}(s)
\]
that we extended to all polynomials by imposing ${\MM}(1) = 1$.
If now
\[
 A_i(s)=s^{2^i}+s^{2^i-1}\qquad \text{ for } i \geq 0 ,
\]
we claim that
\begin{equation}\label{eq:mod-two-Pascal}
 \MM A_i=A_i+A_{i+1}.
\end{equation}

To prove this, set $Q_j(s)=\MM(s^j)$.
The binomial formula for the entries of $\Mat$ gives, modulo $2$,
\[
 \sum_{j\geq0}Q_j(s)t^j
 =\frac{1+st}{1+(s^2+s)t+st^2}.
\]
Consequently,
\[
 Q_0=1,\qquad Q_1=s^2,\qquad
 Q_j=(s^2+s)Q_{j-1}+sQ_{j-2}\quad(j\geq2).
\]
For our convenience, let us set $Q_{-1} = 1$ so that the Fibonacci-type recurrence also holds for $j = 1$.
Let~$\alpha,\beta$ be the roots of
\[
 X^2+(s^2+s)X+s=0
\]
in a suitable algebraic extension of $\mathbb F_2(s)$.
For $j\geq1$ we have
\[
 Q_j+Q_{j-1}
 =\frac{s+1}{s}\bigl(\alpha^j+\beta^j\bigr).
\]
Indeed, both sides satisfy the same second-order recurrence and agree for $j=0,1$.

If $j=2^i$, the Frobenius identity yields
\[
 \alpha^j+\beta^j=(\alpha+\beta)^j=(s^2+s)^j.
\]
Hence
\[
 \begin{aligned}
 \MM A_i
 &=Q_j+Q_{j-1}\\
 &=s^{j-1}(s+1)^{j+1}\\
 &=s^{j-1}(s+1)(s^j+1)\\
 &=(s^j+s^{j-1})+(s^{2j}+s^{2j-1})
 =A_i+A_{i+1},
 \end{aligned}
\]
proving \eqref{eq:mod-two-Pascal}.

Since $f_0(s)=s=1+A_0(s)$ and $f_{n+1}=\MM f_n$, induction using \eqref{eq:mod-two-Pascal} and Pascal's identity proves
\[
 f_n(s)=1+\sum_{i=0}^n\binom ni A_i(s)
 \qquad\text{in }\mathbb F_2[s].
\]
Only the odd binomial coefficients contribute, as claimed.
\end{proof}

\begin{theorem}\label{thm:mod-four}
For every $n\geq1$,
\[
\begin{aligned}
 |K_n|&\equiv 2     \mod4 &&\qquad \text{if $n$ is odd},\\
 |K_n|&\equiv n-1   \mod4 &&\qquad \text{if $n$ is even}.
\end{aligned}
\]
\end{theorem}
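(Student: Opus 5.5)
The plan is to reduce everything to counting odd coefficients, using \eqref{eq:even-norm}, \eqref{eq:odd-norm} and Lemma~\ref{lemma:chain-vector-mod-two}. Since $x^2\equiv 0$ or $1\pmod 4$ according to the parity of $x$, a squared $\ell^2$-norm $\norm{v}_2^2$ of an integer vector is congruent modulo $4$ to the number of odd entries of $v$. Hence $|K_{2m}|\bmod 4$ is the number of monomials of $f_m(s)\bmod 2$. Likewise, by \eqref{eq:Bin-action}, $|K_{2m+1}|\bmod 4$ is the number of monomials of $f_m(s+s^2)\bmod 2$. Throughout, $N=2^{\nu(m)}$ denotes the number of $i$ with $\binom mi$ odd, where $\nu(m)$ is the binary digit sum of $m$. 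For $m\ge1$ we have $N\ge2$, so $2N\equiv0\pmod 4$. The case $n=1$ is checked directly: $|K_1|=2$.

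\emph{Even case $n=2m$, $m\ge1$.} Since $A_0=1+s$, Lemma~\ref{lemma:chain-vector-mod-two} gives
\[
f_m\equiv s+\sum_{i\ge1,\ \binom mi\text{ odd}}\bigl(s^{2^i-1}+s^{2^i}\bigr)\pmod 2 .
\]
For $i\ge1$ the supports $\{2^i-1,2^i\}$ are pairwise disjoint. The only overlap is between $s$ and $A_1=s+s^2$, and it occurs exactly when $m$ is odd.

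If $m$ is odd, $s$ cancels and we are left with $s^2$ together with two monomials for each of the $N-2$ indices $i\ge2$. This gives $2N-3\equiv1\pmod 4$ monomials, and indeed $n-1=2m-1\equiv1$.

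If $m$ is even, we get $1+2(N-1)=2N-1\equiv3\pmod 4$ monomials, and indeed $n-1\equiv3$. This settles the even case.

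\emph{Odd case $n=2m+1$.} Substitute $h=s+s^2$ into the expression above. Writing $1+A_0(h)=h$ and using the Frobenius identity $h^{2^i}=s^{2^i}+s^{2^{i+1}}$, one computes over $\mathbb F_2$
\[
A_i(h)=h^{2^i-1}(1+h)=s^{2^i-1}(1+s)^{2^i-2}(1+s^3),
\]
because $(1+s)(1+s+s^2)=1+s^3$. By Lucas' theorem $(1+s)^{2^i-2}=\sum_{k=0}^{2^{i-1}-1}s^{2k}$. Multiplying by $1+s^3$ yields $2^i$ distinct monomials, with exponents in $[2^i-1,2^{i+1}]$: the even shifts $2k$ and the odd shifts $2k+3$ never coincide.

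So $f_m(h)\bmod 2$ is $s+s^2$ plus a sum of the blocks $A_i(h)$, $i\ge1$, $\binom mi$ odd, and each block with $i\ge2$ contributes a number of monomials divisible by $4$. The remaining work is to control the overlaps. Consecutive blocks $A_i(h)$ and $A_{i+1}(h)$ overlap only in the short window $[2^{i+1}-1,2^{i+1}]$, and non-consecutive blocks have disjoint ranges. The step I would carry out is to write down these overlapping monomials explicitly. The top of block $i$ consists of the exponents $2^{i+1}-1$ and $2^{i+1}$, coming from $2k=2^i-2$ and $2k+3=2^i+1$. The bottom of block $i+1$ consists of the exponents $2^{i+1}-1$ and $2^{i+1}+1$. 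They therefore share exactly one monomial, $s^{2^{i+1}-1}$, so each such coincidence lowers the count by $2$.

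One then has to verify, in each pattern of odd $\binom mi$ (in particular according to whether $\binom m1$ and $\binom m2$ are odd, which governs the interaction with the low-degree part $s+s^2$), that the total number of monomials is $\equiv2\pmod 4$. I expect this overlap bookkeeping to be the main obstacle. If it becomes unwieldy, the fallback is an inductive argument on the blocks, adding one block at a time and tracking how the monomial count modulo $4$ changes.
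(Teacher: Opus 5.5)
Your even case is correct and follows the paper's argument. The odd case has a genuine gap. The overlap claim your bookkeeping rests on is false, and the case is left unfinished.

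The slip is in the top even term of block $i$. For $2k=2^i-2$ the exponent is $(2^i-1)+(2^i-2)=2^{i+1}-3$, not $2^{i+1}-1$. The exponents of $A_i(h)$ are:
\begin{itemize}
\item the odd numbers $2^i-1+2k$, from $2^i-1$ to $2^{i+1}-3$;
\item the even numbers $2^i+2+2k$, from $2^i+2$ to $2^{i+1}$.
\end{itemize}
So the support of $A_i(h)$ is $\{2^i-1\}\cup[2^i+1,2^{i+1}-2]\cup\{2^{i+1}\}$. For example, $A_1(h)=s+s^4$, $A_2(h)=s^3+s^5+s^6+s^8$ and $A_3(h)=s^7+s^9+s^{10}+\cdots+s^{14}+s^{16}$.

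Thus block $i$ contains $s^{2^{i+1}}$ but not $s^{2^{i+1}-1}$, while block $i+1$ contains $s^{2^{i+1}-1}$ but not $s^{2^{i+1}}$. In the window where their ranges meet, the two blocks interleave rather than collide, and all blocks have pairwise disjoint supports. Your claimed coincidence would in fact contradict the statement. For $m=5$ the odd binomials occur at $i\in\{0,1,4,5\}$. Subtracting $2$ for an overlap of blocks $4$ and $5$ would give $2+16+32-2\equiv0\pmod 4$ monomials. So the pattern-by-pattern verification you propose cannot succeed as set up.

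Once disjointness is established, no case analysis is needed, and this is how the paper concludes. Set $\varepsilon_i=\binom mi\bmod 2$. The monomial $s^2$ lies in no block and $s$ lies only in $A_1(h)$. Hence $h+\varepsilon_1A_1(h)$ always has exactly two monomials ($s+s^2$ or $s^2+s^4$). The number of monomials of $f_m(h)\bmod 2$ is therefore $2+\sum_{i\geq2}\varepsilon_i2^i\equiv2\pmod 4$ for every $m\geq0$. In particular $\binom m2$ plays no role, and $n=1$ need not be treated separately.
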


\begin{proof}
For $g\in\mathbb F_2[s]$, let $w(g)$ denote its number of nonzero coefficients.
Since an integer square is congruent modulo $4$ to its parity, the norm formulas give
\[
 |K_{2m}|\equiv w(f_m),\qquad
 |K_{2m+1}|\equiv w\bigl(f_m(s+s^2)\bigr)
 \pmod4,
\]
where the polynomials on the right are reduced modulo $2$.

Put $\varepsilon_i=\binom mi\bmod2$, with $\varepsilon_i=0$ for $i>m$.
By Lemma~\ref{lemma:chain-vector-mod-two},
\[
 f_m(s)=s+\sum_{i\geq1}\varepsilon_i
 \bigl(s^{2^i}+s^{2^i-1}\bigr).
\]
The displayed pairs have disjoint supports, and only the pair with $i=1$ contains $s$.
Thus
\[
 w(f_m)=1+2\sum_{i\geq1}\varepsilon_i-2\varepsilon_1.
\]
If $m\geq1$, then
\[
 \sum_{i\geq0}\varepsilon_i
 \equiv\sum_{i=0}^m\binom mi=2^m\equiv0\pmod2.
\]
Since $\varepsilon_0=1$ and $\varepsilon_1\equiv m\pmod2$, it follows that
\[
 |K_{2m}|\equiv3-2m\equiv 2m-1\pmod4.
\]

As for odd values of $n$, put $h(s)=s+s^2$ and $A_i(s)=s^{2^i}+s^{2^i-1}$.
If $i\geq1$ and $r=2^i$, then
\[
 A_i(h(s))
 =s^r+s^{2r}+s^{r-1}(1+s)^{r-1}
 =s^{r-1}+\sum_{j=r+1}^{2r-2}s^j+s^{2r}.
\]
Here we used $(1+s)^{r-1}=1+s+\cdots+s^{r-1}$ over $\mathbb F_2$; the displayed sum is empty when $r=2$.
These polynomials have pairwise disjoint supports and $w(A_i(h))=2^i$.
Only $A_1(h)=s+s^4$ meets the support of $h$, at the monomial $s$.
Consequently, for every $m\geq0$,
\[
 \begin{aligned}
 w(f_m(h))
 &=2+\sum_{i\geq1}\varepsilon_i2^i-2\varepsilon_1\\
 &=2+\sum_{i\geq2}\varepsilon_i2^i
 \equiv2\pmod4.
 \end{aligned}
\]
\end{proof}

\appendix
\section{Computation of $|K_n|$}\label{app:recurrences}

Recall that $u_n=\cv(K_n)=\Mat^n\unit$.
Equations~\eqref{eq:even-norm} and~\eqref{eq:odd-norm} give
\[
 |K_{2m}|=\norm{u_m}_2^2,\qquad
 |K_{2m+1}|=\norm{\Bin u_m}_2^2.
\]
Notice that the computation of $|K_{2m}|$ and $|K_{2m-1}|$ only requires that of $\Mat^m \unit$ and $\Bin \Mat^{m-1} \unit$, which are supported on their first $2^m$ entries.
For instance, $|K_{30}|$ requires only the $32768$ coefficients of $\Mat^{15}\unit$ and does not require enumerating $K_{30}$.

If $Q_0 = 1$ and $Q_j(s) = \widehat{\Mat e_j}(s), j \geq 1$, equation~\eqref{eq:K-bivariate-series} immediately gives $Q_1(s) = 2s + s^2$ and
\begin{equation}\label{eq:column-recurrence}
\begin{gathered}
Q_j(s)=(3s+s^2)Q_{j-1}(s)+sQ_{j-2}(s)
\qquad \text{ whenever } j\geq2.
\end{gathered}
\end{equation}
These \(Q_j\) are precisely the columns of \(\Mat\), encoded as polynomials in \(s\).

Let $v=(v_0,\ldots,v_d)^{\mathsf T}$ be a finitely supported vector.
Then $\widehat{\Mat v}(s)=\sum_{j=0}^d v_jQ_j(s)$, and since the $Q_j$ satisfy the three-term recurrence~\eqref{eq:column-recurrence}, this sum can be evaluated by Clenshaw's algorithm~\cite{Clenshaw}: setting~$b_{d+1}=b_{d+2}=0$ and $b_j=v_j+(3s+s^2)b_{j+1}+s\,b_{j+2}$ for $j=d,\ldots,1$, one has
\begin{equation}\label{eq:backward-M-evaluation}
  \widehat{\Mat v}(s)=v_0+(2s+s^2)\,b_1(s)+s\,b_2(s),
\end{equation}
where the last step uses $Q_1=2s+s^2$ and $Q_2-(3s+s^2)Q_1=s$.
Similarly, by~\eqref{eq:Bin-action}, $\widehat{\Bin v}(s)=\widehat v(s+s^2)$ is computed by Horner's scheme.
The resulting procedure is summarized in Algorithm~\ref{alg:cardinalities}.

Both procedures use only additions, shifts of coefficient arrays (multiplication by \(s\) or \(s^2\)), and multiplication by the small integer \(3\), hence provide an efficient exact implementation over the integers.
They also admit matrix-free implementations using only \(O(d)\) working memory.
The backward recurrence for \(\Mat\) avoids constructing or storing the \(O(d^2)\) relevant binomial entries of the matrix, although its arithmetic complexity with ordinary coefficient arrays remains \(O(d^2)\), as does that of a direct streamed computation from the binomial formula.

Implementations of Algorithm~\ref{alg:cardinalities} in Python and in C, together with the exact values of $|K_n|$ for $n$ up to 30, are provided in the files attached to this arXiv submission; their output agrees with the previously tabulated values through $n=14$.
The complete run through $n=30$ of the Python implementation took~$28$ minutes on a standard laptop; the C implementation, which is faster but uses more memory, finished the same task in $43$ seconds on the same hardware.

\begin{algorithm}[H]
\caption{Cardinalities of $K_n$ for $0 \le n \le N$}
\label{alg:cardinalities}
\begin{algorithmic}[1]
\Statex All arithmetic takes place in $\mathbb{Z}[s]$; for $p \in \mathbb{Z}[s]$,
  $\|p\|^2$ denotes the sum of the squares of its coefficients.
\Statex
\Function{Step}{$v$}
  \Comment{$\sum_j v_j Q_j(s)$, by Clenshaw's recurrence}
  \State write $v = \sum_{j=0}^{d} v_j s^j$ with $d = \deg v$
  \State $b_1 \gets 0$; \quad $b_2 \gets 0$
  \For{$j = d$ \textbf{down to} $1$}
    \State $(b_1, b_2) \gets \bigl(v_j + (3s+s^2)\,b_1 + s\,b_2,\; b_1\bigr)$
  \EndFor
  \State \Return $v_0 + (2s+s^2)\,b_1 + s\,b_2$
\EndFunction
\Statex
\Function{Smooth}{$v$}
  \Comment{$\sum_j v_j (s+s^2)^j$, by Horner's rule}
  \State write $v = \sum_{j=0}^{d} v_j s^j$ with $d = \deg v$
  \State $b \gets 0$
  \For{$j = d$ \textbf{down to} $0$}
    \State $b \gets v_j + (s+s^2)\,b$
  \EndFor
  \State \Return $b$
\EndFunction
\Statex
\Procedure{Cardinalities}{$N$}
  \State $v \gets s$
  \For{$n = 0$ \textbf{to} $N$}
    \If{$n$ is even}
      \State \textbf{output} $\|v\|^2$
    \Else
      \State \textbf{output} $\|\Call{Smooth}{v}\|^2$
      \State $v \gets \Call{Step}{v}$
    \EndIf
  \EndFor
\EndProcedure
\end{algorithmic}
\end{algorithm}

\end{document}